\newtheorem{assum}{Assumption}
\newtheorem{thm}{Theorem}
\newtheorem{prop}[thm]{Proposition}
\newtheorem{cor}[thm]{Corollary}
\newcommand{\Z}{{\mathbb Z}}
\newcommand{\N}{{\mathbb N}}
\newcommand{\R}{{\mathbb R}}
\newcommand{\email}[1]{\href{mailto:#1}{#1}}
\title{Self-similarity in an exchangeable site-dynamics model} 
\begin{document}

\author{
Iddo Ben-Ari\footnote{Department of Mathematics, University of Connecticut, Storrs, CT 06269-1009, USA; 
\email{iddo.ben-ari@uconn.edu}}
\and
Rinaldo B. Schinazi \footnote{Department of Mathematics, University of Colorado, Colorado Springs, CO 80933-7150, USA;
\email{rinaldo.schinazi@uccs.edu}}
}

\maketitle

\begin{abstract}
We consider a model for which every site of $\N$ is assigned a fitness in $[0,1]$. At every discrete time all the sites are updated and each site samples a uniform on $[0,1]$, independently of everything else. At every discrete time  and independently of the past the environment is good with probability $p$ or bad with probability $1-p$. The fitness of each site is then updated to the maximum or the minimum between its present fitness and the sampled uniform, according to whether the environment is good or bad. Assuming the initial fitness distribution is exchangeable over the site indexing, the empirical fitness distribution is a probability-valued Markov process. We show that this Markov process converges to an explicitly-identified stationary distribution exhibiting  a self-similar structure.  

\end{abstract}

{\bf Keywords:}
Markov chain, exchangeable stochastic process, self-similar measure, population biology.

\section{Introduction and statement of results}
\subsection{Introduction}
We consider a discrete-time model on state space $[0,1]^\N$. At each time unit $t\in \Z_+$, each site $n\in\N$ has ``fitness'' $\eta_t (n) \in [0,1]$. The system evolves in time as follows. Let $p$ be a fixed number in $(0,1)$.  At any time $t\geq 0$ we generate a Bernoulli random variable $B_{t+1}$ with parameter $p$, and independently a sequence $(U_{t+1}(n):n\in \N)$ of IID uniform random variables on $[0,1]$. We update the model according to the following rules.
\begin{itemize}
   \item If $B_{t+1}=1$ then  for every $n\in \N$, $\eta_{t+1}(n)  = \max (\eta_t (n), U_{t+1}(n))$.
  \item If $B_{t+1}=0$ then  for every $n\in \N$, $\eta_{t+1}(n)  = \min (\eta_t (n), U_{t+1}(n))$.
\end{itemize}
We think of the $(B_t:t\in\N)$  as a time-evolving environment. At a given time the environment can be``good'' or ``bad'', it is the same for all sites.  The impact of the environment at a given site, however, depends on the current site fitness as well as its own ``luck'', independently of what happens to the other sites. 

The resulting process is a Markov chain. Additionally, the evolution of each individual site is also a Markov chain. Thus, the entire system can be viewed as a
system of infinitely many coupled Markov chains.

 The present model is related to the  so-called ``catastrophe'' models, see for instance \cite{Brockwell et al.} and  \cite{Brockwell}. In particular a model introduced in \cite{Neuts} and studied in \cite{Ben-Ari et al.} is reminiscent of our model. We now describe it.  At every discrete time the population increases by one unit with probability $p$ or is subjected to a catastrophe (i.e. it loses a random number of individuals) with probability $1-p$. Hence, similarly to the present model the evolution is caused by external environmental changes.  A major difference with the present model, however, is that catastrophe models only track the overall size of the population. We track every individual in the population.

Also related to the present model is the Bak-Sneppen model, see \cite{Bak93}. There, a fixed number of sites are arranged in a circle. At the initial time every site is given a fitness uniformly distributed in $[0,1]$. At every discrete time the site with the smallest fitness as well as its two nearest neighbors have their fitness updated by sampling three independent uniform random variables. One can think of Bak-Sneppen as a model for a group of species that evolve through internal competition alone. At the other extreme, our model follows a group of species that evolve through external  pressure alone. Our model can be compared to Bak-Sneppen through the marginal distribution of the stationary distribution at a site. For Bak-Sneppen the marginal distribution is believed to be uniform  (see \cite{ Meester2003} and \cite {Meester2004}) as it is for our model in the case $p=1/2$. For $p\not =1/2$ on the other hand  the marginal distribution for our model will be computed explicitly and shown not to be uniform.

Thanks to the exchangeable dynamics of our model de Finetti's Theorem provides an underlying Markov chain which turns out to be quite interesting in its own right. In particular, this underlying Markov chain has a stationary measure that has been studied in the fractals literature, see \cite{Strichartz95}. We believe this to be an interesting example of a self-similar measure that arises naturally from a rather simple Markov chain.


 
 
\subsection{Main Results} 
The dynamics for the system are exchangeable. That is,  if the initial distribution is exchangeable, for example IID, then $(\eta_t (\cdot):n\in\Z_+) $ is an exchangeable sequence for all $t$. We can apply de Finetti's theorem. In order to do so, assume that the initial distribution is exchangeable. Let $u\in [0,1]$ and let 
$$ {\bf I}_t (n,u) = {\bf 1}_{\{\eta_t (n) \le u\}}.$$ 

Then for every $t$ and $u$, the family $ (I_t(n,u):n\in\N)$ is an exchangeable sequence. In particular, it follows from de Finetti's theorem that there exists a random variable $\Theta_t (u)$, measurable with respect to the exchangeable $\sigma$-algebra ${\cal E}$ such that the distribution of  $(I_t(n,u):n\in\N)$, conditioned on ${\cal E}$ is IID with a Bernoulli distribution with parameter  $\Theta_t(u)$. Furthermore, 

$$ \Theta_t (u) = \lim_{N\to\infty} \frac{1}{N} \sum_{n=1}^N {\bf I}_t (n,u), \mbox{ a.s. }$$ 

A key observation is the following recursion formula for $ \Theta_t (u)$.
\begin{equation}
\label{eq:thetat_evol}
\Theta_{t+1}(u)  = \begin{cases} \Theta_t(u)  u &\mbox{ if } B_{t+1} = 1 \\ \Theta_t(u)  + (1-\Theta_t(u))u & \mbox{ if } B_{t+1}=0. \end{cases}
\end{equation}
We now prove this formula. Note that if $B_{t+1}=1$ then 
$${\bf I}_{t+1}(n,u)={\bf I}_t(n,u){\bf 1}_{\{U_{t+1}(n)\leq u\}}.$$
On the other hand if $B_{t+1}=0$ then 
$${\bf I}_{t+1}(n,u)={\bf I}_t(n,u)+(1-{\bf I}_t(n,u)){\bf 1}_{\{U_{t+1}(n)\leq u\}}.$$
The formula \eqref{eq:thetat_evol} now follows from the Law of Large Numbers for exchangeable sequences and the independence of $\eta_t$ and $(U_{t+1}(n):n\in \N)$.

Note that the function $u \to \Theta_t(u)$ is a random cumulative distribution function, and its distribution determines the distribution of ${\bf \eta}_t$, conditioned on ${\cal E}$. Furthermore, $t\to\Theta_t(\cdot)$ is a Markov chain on the space of CDFs. 

\begin{thm}
\label{thm:limit}
Let $G_0,G_1,\dots$ be $\mbox{IID}$ $\mbox{Geom}(1-p)$-distributed RVs, and for $k\in\Z_+$, let  $T_k= G_0 + \dots +G_k$.  Then the distribution of the random CDF  $\Theta_t(\cdot)$ converges as $t\to\infty$ to that of the random CDF $\Theta_\infty$ given by 
\begin{equation}
\Theta_\infty (u) = \label{eq:limit}  \sum_{k=0}^\infty u^{T_k} \left( \frac{1-u}{u}\right)^k,~ u \in (0,1).
\end{equation} 
\end{thm}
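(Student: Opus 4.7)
Fix $u \in (0,1)$ and view the marginal $X_t := \Theta_t(u)$ as a Markov chain on $[0,1]$. By \eqref{eq:thetat_evol}, it evolves as $X_{t+1} = f_{B_{t+1}}(X_t)$ with
\[
f_1(x) = u x \text{ (with probability $p$)}, \qquad f_0(x) = u + (1-u) x \text{ (with probability $1-p$)}.
\]
Both maps are affine contractions of $[0,1]$ with Lipschitz constants $u$ and $1-u$, placing us in the classical iterated-function-system framework of Diaconis--Freedman.

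My plan is to use the backward coupling trick. First, I would introduce the time-reversed composition
\[
\tilde X_t := f_{B_1} \circ f_{B_2} \circ \cdots \circ f_{B_t}(x_0), \qquad x_0 \in [0,1].
\]
For each fixed $t$, $\tilde X_t$ has the same law as $X_t$ started from $x_0$. The Lipschitz constant of $f_{B_1}\circ\cdots\circ f_{B_t}$ equals $\prod_{s \le t}\bigl(u\,\mathbf{1}_{B_s = 1} + (1-u)\,\mathbf{1}_{B_s = 0}\bigr)$, which decays exponentially a.s.\ by the strong law of large numbers; hence $\tilde X_t$ is a.s.\ Cauchy and converges to a random limit $X_\infty$ whose distribution does not depend on $x_0$. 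A standard coupling argument then gives $X_t \Rightarrow X_\infty$ for any initial distribution.

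The explicit formula will come from identifying $X_\infty$ via the first bad environment. Let $G_0 := \min\{i \ge 1 : B_i = 0\}$, which is $\mathrm{Geom}(1-p)$-distributed on $\{1,2,\dots\}$. A direct computation gives
\[
f_1^{G_0-1}\circ f_0(y) = u^{G_0} + u^{G_0-1}(1-u)\, y,
\]
so applying the strong Markov property at time $G_0$ yields the distributional recursion
\[
X_\infty \stackrel{d}{=} u^{G_0} + u^{G_0-1}(1-u)\, X_\infty',
\]
with $X_\infty'$ an independent copy of $X_\infty$. Iterating with IID $G_1, G_2, \dots \sim \mathrm{Geom}(1-p)$ telescopes into
\[
X_\infty = \sum_{k=0}^\infty u^{T_k - k}(1-u)^k = \sum_{k=0}^\infty u^{T_k}\!\left(\frac{1-u}{u}\right)^k,
\]
with $T_k = G_0 + \cdots + G_k$, matching the right-hand side of \eqref{eq:limit}.

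The last step will be to promote pointwise-in-$u$ convergence into convergence of the full random CDF $\Theta_t(\cdot)$. The key observation is that the backward coupling above can be carried out with the same Bernoulli sequence $(B_t)$ for every $u$ simultaneously, yielding a single joint realization $\tilde\Theta_t(\cdot)$ with $\tilde\Theta_t(u) \to \Theta_\infty(u)$ a.s.\ for every $u$ in a countable dense subset of $[0,1]$, all driven by the common randomness $(T_k)$. Combined with the a.s.\ monotonicity of $u \mapsto \tilde\Theta_t(u)$ and the a.s.\ continuity of the limit (which follows from uniform convergence of the series on compact subsets of $(0,1)$), this upgrades to a.s.\ weak convergence of the associated random probability measures, and since $\tilde\Theta_t \stackrel{d}{=} \Theta_t$, convergence in distribution of the CDF-valued Markov chain follows. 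I expect the main subtlety to be exactly this passage from fixed-$u$ convergence to joint CDF convergence; the explicit series makes it tractable because the same $(T_k)$ controls all $u$.
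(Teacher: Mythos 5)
Your proposal is correct and follows essentially the same route as the paper: both rest on the Diaconis--Freedman backward iteration, whose composed Lipschitz constant decays exponentially, giving an a.s.\ limit that is then made explicit through the geometric waiting times $T_k$ and upgraded to convergence of the random CDF by running the coupling with the same $(B_t)$ for all $u$ simultaneously and using monotonicity plus continuity of the limit. The only cosmetic difference is that you extract the series via a renewal/strong-Markov recursion at the successive times $B_t=0$, whereas the paper writes out the full backward series and reindexes the sum over the times at which the nonzero-intercept map is applied.
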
 

Observe that since the CDF-valued process $(\Theta_t (\cdot):t\in\Z_+)$ is a Markov process,  the convergence in the theorem implies that the process possesses a unique stationary distribution given by the expression in \eqref{eq:limit}, though this can be also verified by a direct calculation.  For every $u$ in $(0,1)$ the probability distribution of $\Theta_\infty(u)$ belongs to a family of probability measures known in the literature as self-similar measures associated with an iterated function system. Fix $u \in (0,1)$ and define the function system  $\{S_0,S_1\}$, $S_j :[0,1]\to [0,1]$ by $S_0(x) = u+ (1-u)x$ and  $S_1 (x) = ux$. By \eqref{eq:thetat_evol} the unique stationary distribution $\mu_u$ for the process
$(\Theta_t (u):t\in\Z_+)$ satisfies
\begin{equation} 
\label{eq:self_similar} \mu_u = (1-p) \mu_u \circ S_0^{-1} + p \mu_u \circ S_1^{-1}.
\end{equation}
The cumulative distribution function $G_u$ corresponding to $\mu_u$ turns out to have remarkable properties, see Figure \ref{fig:1}. It is continuous, see Proposition \ref{prop:Gu} but singular with respect to the Lebesgue measure, see Proposition \ref{prop:Sing}. Moreover, we have an explicit formula for $G_u$ on a dense set of $[0,1]$, see equation (\ref{eq:Gu_J}).

\begin{figure}[ht] 
\includegraphics[scale=0.5]{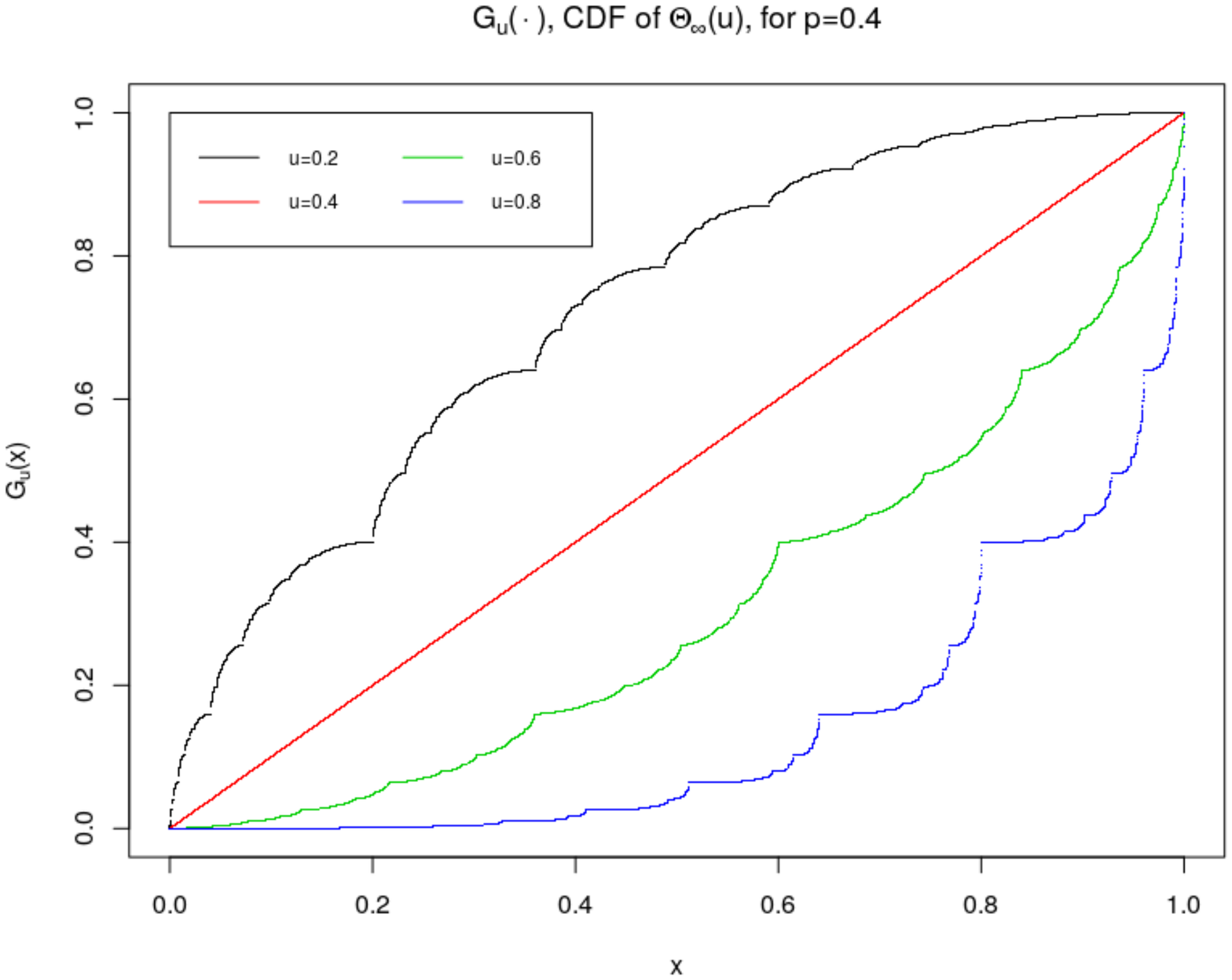}
\caption{The cumulative distribution function of $\Theta_\infty(u)$ for some values of $u$ and with $p=0.4$}
\label{fig:1}
\end{figure} 

The results above are related to a long standing open problem. Consider the following iterated function system. Let
$\{T_0,T_1\}$, $T_j :[0,1]\to [0,1]$ by $T_0(x) = 1-u+ ux$ and  $T_1 (x) = ux$. 
In \cite{Erdos} examples for $u$'s in $(1/2,1)$ are given for which the stationary distribution corresponding to the system $\{T_0,T_1\}$ is continuous but singular with respect to the Lebesgue measure. It is also known that the stationary distribution is absolutely continuous for some values in $(1/2,1)$ and singular for all values in $(0,1/2)$.  Unfortunately our method does not apply to the system $\{T_0,T_1\}$. As far as we know the question of determining for which $u$'s in $(1/2,1)$ the stationary measure is singular is still open, see also the discussion in \cite[p. 24]{Barnsley}. However, we prove the following representation for the unique stationary distribution of the system $\{T_0,T_1\}$. For a fixed $u$ in $(0,1)$, the stationary distribution has the same distribution as 
$$\frac{1-u}{u}\sum_{k\geq 0} u^{T_k},$$
where $T_k = G_0 +\dots + G_k$, and $G_0,G_1,\dots $ are IID $\mbox{Geom}(1-p)$. This representation formula is an application of Corollary \ref{cor:twofunctions}.

The exchangeability of the random variables $(\eta_t(n):n\in\N)$ implies that for every $t\in\Z_+$, $N\in\N$ and and every $(u_1,u_2,\dots,u_N)$ in $[0,1]^N$,
$$ P( \bigcap_{n=1}^N \{\eta_t(n)  \le u_n\}|{\cal E}) = \prod_{n=1}^N P(\eta_t(n) \le u_n|{\cal E}) = \prod_{n=1}^N \Theta_t (u_n)$$ 
As a consequence, we have the following 
\begin{cor} 
\label{cor:pi_eta} 
The Markov chain $(\eta_t:t\in\Z_+)$ has a unique stationary distribution $\pi_p$ given by 

$$ \pi_p \left(\eta\in [0,1]^\N:\bigcap_{n=1}^N\{0\leq \eta(n)\leq u_n\}\right) = E [ \prod_{n=1}^N \Theta_\infty(u_n) ],$$
for every natural number $N$ and every $(u_1,u_2,\dots,u_N)$ in $[0,1]^N$.
\end{cor}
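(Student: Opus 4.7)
The plan is to read off the finite-dimensional marginals of $\eta_t$ from the de Finetti identity already derived in the excerpt, and then push $t\to\infty$ via Theorem \ref{thm:limit}. Starting from any exchangeable initial law and integrating the conditional identity
\[
P\Bigl(\bigcap_{n=1}^N\{\eta_t(n)\le u_n\}\,\Big|\,\mathcal{E}\Bigr)=\prod_{n=1}^N \Theta_t(u_n)
\]
over the exchangeable $\sigma$-algebra $\mathcal{E}$ gives
\[
P\Bigl(\bigcap_{n=1}^N\{\eta_t(n)\le u_n\}\Bigr)=E\Bigl[\prod_{n=1}^N \Theta_t(u_n)\Bigr].
\]

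I would next let $t\to\infty$ on the right. Theorem \ref{thm:limit} gives weak convergence of the random CDF $\Theta_t(\cdot)$ to $\Theta_\infty(\cdot)$, and the explicit series \eqref{eq:limit} shows that $\Theta_\infty$ is almost surely continuous on $(0,1)$, since the $k$th summand equals $u^{T_k-k}(1-u)^k$ and is bounded by $(1-u)^k$ uniformly on compact subsets of $(0,1)$, yielding uniform-on-compacts convergence of a series of continuous functions. Consequently evaluation at any fixed $u\in(0,1)$ is an a.s.\ continuous functional of the limit, so $(\Theta_t(u_1),\dots,\Theta_t(u_N))\Rightarrow(\Theta_\infty(u_1),\dots,\Theta_\infty(u_N))$ jointly, and bounded convergence applied to the continuous map $(x_1,\dots,x_N)\mapsto \prod_n x_n$ delivers
\[
P\Bigl(\bigcap_{n=1}^N\{\eta_t(n)\le u_n\}\Bigr)\longrightarrow E\Bigl[\prod_{n=1}^N \Theta_\infty(u_n)\Bigr].
\]
Since finite cylinders determine a unique Borel probability on $[0,1]^{\N}$, this defines $\pi_p$ and shows $\eta_t\Rightarrow\pi_p$ from any exchangeable start.

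To close out stationarity and uniqueness, I would then start the chain from $\eta_0\sim \pi_p$, which is itself exchangeable by construction, so the identity above holds at every $t$ and the finite-dimensional laws are constant in $t$ and equal to $\pi_p$; any other exchangeable stationary $\pi$ would coincide with the law of $\eta_t$ for all $t$ by stationarity, while the previous step forces $\eta_t\Rightarrow\pi_p$, so $\pi=\pi_p$. The main obstacle I anticipate is the upgrade from the CDF-level convergence supplied by Theorem \ref{thm:limit} to joint convergence of the finitely many evaluations $\Theta_t(u_n)$; this rests on the a.s.\ continuity of $\Theta_\infty$ sketched above, after which everything else is routine de Finetti bookkeeping together with bounded convergence.
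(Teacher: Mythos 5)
Your argument is correct and takes essentially the same route as the paper, whose entire proof consists of the displayed de Finetti identity $P\bigl(\bigcap_{n=1}^N\{\eta_t(n)\le u_n\}\mid\mathcal{E}\bigr)=\prod_{n=1}^N\Theta_t(u_n)$ followed by taking expectations and invoking Theorem \ref{thm:limit}. The details you add (joint convergence of the evaluations via a.s.\ continuity of $\Theta_\infty$ on $(0,1)$ --- though this also follows more directly from the a.s.\ pointwise convergence of the coupled versions $\tilde\Theta_t(u)$ in Theorem \ref{thm:convergence Theta} --- and the stationarity/uniqueness bookkeeping) merely make explicit what the paper leaves implicit, with the same caveat that uniqueness is argued within the class of exchangeable laws, the only setting in which $\Theta_t$ is defined.
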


The moments of $\Theta_\infty(u)$ can be computed iteratively using the following formula.

\begin{prop}
\label{prop:moments} 
For  $k=1,2,\dots$ 
$$E[ \Theta_\infty^k(u)] = \frac{1-p}{1-p u^k - (1-p) (1-u)^k} \sum_{j=0}^{k-1} \binom{k}{j} u^{k-j} (1-u)^{j} E[\Theta_\infty^{j}(u)].$$ 
\end{prop}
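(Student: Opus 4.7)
The plan is to derive the recursion directly from the distributional fixed-point equation satisfied by $\Theta_\infty(u)$. By Theorem~\ref{thm:limit}, $\Theta_\infty(u)$ is the stationary distribution for the real-valued Markov chain $(\Theta_t(u):t\in\Z_+)$, so passing to the limit $t\to\infty$ in the recursion \eqref{eq:thetat_evol} gives the identity in distribution
\begin{equation*}
\Theta_\infty(u) \stackrel{d}{=} \begin{cases} u\,\Theta_\infty(u) & \text{with probability } p,\\ u + (1-u)\,\Theta_\infty(u) & \text{with probability } 1-p,\end{cases}
\end{equation*}
where the coin flip is independent of $\Theta_\infty(u)$ on the right-hand side (note that $\Theta_t(u)+(1-\Theta_t(u))u = u+(1-u)\Theta_t(u)$).

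Next, I raise both sides to the $k$-th power and take expectations. The good-environment case contributes $pu^k E[\Theta_\infty^k(u)]$. For the bad-environment case I expand via the binomial theorem:
\begin{equation*}
E\bigl[(u+(1-u)\Theta_\infty(u))^k\bigr] = \sum_{j=0}^k \binom{k}{j} u^{k-j}(1-u)^j E[\Theta_\infty^j(u)].
\end{equation*}
Combining these yields
\begin{equation*}
E[\Theta_\infty^k(u)] = pu^k E[\Theta_\infty^k(u)] + (1-p)\sum_{j=0}^k \binom{k}{j} u^{k-j}(1-u)^j E[\Theta_\infty^j(u)].
\end{equation*}
I then peel off the $j=k$ term of the inner sum, which equals $(1-p)(1-u)^k E[\Theta_\infty^k(u)]$, move all $E[\Theta_\infty^k(u)]$ contributions to the left-hand side, and solve.

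The only point that requires a brief check is that the prefactor $1 - pu^k - (1-p)(1-u)^k$ is nonzero, which is needed both to do the division and to guarantee that the moments are well defined by this recursion. For $u\in(0,1)$ and $k\geq 1$ one has $u^k < 1$ and $(1-u)^k < 1$, so $pu^k + (1-p)(1-u)^k < p + (1-p) = 1$, and the prefactor is strictly positive. This yields the stated formula, starting from the base case $E[\Theta_\infty^0(u)] = 1$. There is no serious obstacle; the only slightly delicate point is the justification of the distributional fixed-point equation, which follows immediately from the convergence in Theorem~\ref{thm:limit} together with the continuity of the maps $x\mapsto ux$ and $x\mapsto u+(1-u)x$ applied to the weakly convergent sequence $\Theta_t(u)$.
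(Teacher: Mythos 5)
Your proof is correct and follows essentially the same route as the paper: both arguments rest on the self-similarity identity $E[f(\Theta_\infty(u))] = pE[f(u\Theta_\infty(u))] + (1-p)E[f(u+(1-u)\Theta_\infty(u))]$, which is exactly your distributional fixed-point equation. The only cosmetic difference is that you apply it directly to $f(z)=z^k$ and expand by the binomial theorem, whereas the paper applies it to $f(z)=e^{\lambda z}$ and differentiates the resulting moment-generating-function identity $k$ times at $\lambda=0$ — the two computations produce identical terms.
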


The marginal distribution at a single site can be computed explicitly,

\begin{cor}
\label{cor:single_site}
The single site chain (at any site) has a unique stationary distribution  with CDF $F_p$ and density  $f_p$ given by 
\begin{align*} F_p (u) &= \frac{(1-p)u}{(1-p)u + p(1-u)}\\
f_p(u) &= \frac{p}{1-p} (\frac{F_p(u)}{u})^2  = \frac{p(1-p)}{((1-2p)u^2 + p)^2}
\end{align*} 
\end{cor}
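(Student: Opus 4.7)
The plan is to identify the single-site stationary CDF with $E[\Theta_\infty(u)]$ and then compute this expectation either by using the first moment formula from Proposition~\ref{prop:moments} or directly from the series representation in Theorem~\ref{thm:limit}.

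First I would argue that, under the stationary law $\pi_p$ (whose existence and uniqueness follow from Corollary~\ref{cor:pi_eta}), the marginal CDF at a single site (which by exchangeability is the same for every site) equals $E[\Theta_\infty(u)]$. Indeed, by the tower property and de Finetti's representation,
\begin{equation*}
F_p(u) \;=\; P(\eta_\infty(n) \leq u) \;=\; E\bigl[P(\eta_\infty(n) \leq u \mid \mathcal{E})\bigr] \;=\; E[\Theta_\infty(u)].
\end{equation*}

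Second, I would evaluate $E[\Theta_\infty(u)]$. The cleanest route is to specialise Proposition~\ref{prop:moments} to $k=1$: using $E[\Theta_\infty^0(u)] = 1$, the proposition reads
\begin{equation*}
E[\Theta_\infty(u)] \;=\; \frac{(1-p)u}{1 - pu - (1-p)(1-u)},
\end{equation*}
and the denominator simplifies to $(1-p)u + p(1-u)$, yielding the claimed formula for $F_p$. Alternatively, one can bypass Proposition~\ref{prop:moments} and take expectations termwise in \eqref{eq:limit}: since $E[u^{G_0}] = (1-p)u/(1-pu)$, independence gives $E[u^{T_k}] = \left((1-p)u/(1-pu)\right)^{k+1}$, and summing the resulting geometric series in $k$ (whose ratio $(1-p)(1-u)/(1-pu)$ is strictly less than $1$ on $(0,1)$, justifying Fubini) produces the same expression.

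Third, the density is obtained by a direct quotient-rule differentiation of $F_p$. Writing $D(u)=(1-p)u + p(1-u) = (1-2p)u + p$, the numerator simplifies via
\begin{equation*}
(1-p)\,D(u) - (1-p)u\,D'(u) \;=\; (1-p)\bigl[D(u) - (1-2p)u\bigr] \;=\; p(1-p),
\end{equation*}
so $f_p(u) = p(1-p)/D(u)^2$, which equals $\tfrac{p}{1-p}(F_p(u)/u)^2$ since $F_p(u)/u = (1-p)/D(u)$.

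There is no genuine obstacle: the only care needed is the bookkeeping of the geometric sum (if one chooses the series route) and verification that the convention for $\mathrm{Geom}(1-p)$ matches the one implicit in \eqref{eq:limit}, which can be read off by checking that \eqref{eq:limit} respects the recursion \eqref{eq:thetat_evol} in distribution.
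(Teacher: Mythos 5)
Your proof is correct and follows essentially the same route as the paper: identify the single-site stationary CDF with $E[\Theta_\infty(u)]$ via Corollary \ref{cor:pi_eta} (with $N=1$) and evaluate it by setting $k=1$ in Proposition \ref{prop:moments}, exactly as in \eqref{eq:expectation}; the series computation and the explicit differentiation are harmless additions. Incidentally, your derivative computation $f_p(u)=p(1-p)/\bigl((1-2p)u+p\bigr)^2$ is the correct one, and the exponent placement in the paper's displayed denominator $((1-2p)u^2+p)^2$ is evidently a typo for $((1-2p)u+p)^2$.
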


Note the following: 
\begin{equation}
\label{eq:symmetry} 
F_p(u) = F_{1-u}(1-p)
\end{equation} 
as well as 
$$ f_p(0+) = \frac{1-p}{p}=\frac{1}{f_p(1-)}.$$

Setting $k=1$ in Proposition \ref{prop:moments}  we obtain 
\begin{equation}
\label{eq:expectation} E[ \Theta_\infty (u) ]= \frac{(1-p)u} {1-p u-(1-p)(1-u)}= \frac{(1-p)u}{(1-p)u + p(1-u)}.
\end{equation} 
Using \eqref{eq:expectation} in Corollary \ref{cor:pi_eta} proves Corollary \ref{cor:single_site}.
\medskip

\subsection{Self-similarity} 
Fix $u \in (0,1)$ and define the function system  $\{S_0,S_1\}$, $S_j :[0,1]\to [0,1]$ by $S_0(x) = u+ (1-u)x$ and  $S_1 (x) = ux$. Then \eqref{eq:thetat_evol} can be written as 
\begin{equation}
\label{eq:theta_function_system} 
\Theta_{t+1}(u) = S_{B_{t+1}} (\Theta_t(u)) 
\end{equation} 
It therefore follows that a probability distribution $\mu_u$  on $[0,1]$ is stationary for $\Theta_{\cdot}(u)$ if and only if 
\begin{equation} 
\label{eq:self_similar} \mu_u = (1-p) \mu_u \circ S_0^{-1} + p \mu_u \circ S_1^{-1}.
\end{equation}
That is, $\mu_u$ is  self-similar with respect to the function system $\{S_0,S_1\}$ and the probability vector  $(1-p,p)$, see \cite{Strichartz95}. 

As the images of $S_0$ and $S_1$ are $[u,1]$ and $[0,u]$, it follows that  $\mu_u$ satisfies \eqref{eq:self_similar} if and only if its CDF $G_u$ satisfies 
\begin{align}
    \label{eq:scaling}  
 G_u (uz) &= p G_u (z)\\
 \label{eq:shifting}
 G_u (u+ (1-u) z) &= p + (1-p) G_u (z)
 \end{align}
 for all $z \in [0,1]$. 
Here is an explicit formula for the CDF of $\Theta_\infty(u)$ on a dense set.
\begin{prop} 
\label{prop:Gu}
Let $u \in (0,1)$. 
 Let $D$ be the set of numbers of the form 
 \begin{equation} 
 \label{eq:the_y}
y = \sum_{l=1}^m  u^{n_l}(1-u)^{l-1}
\end{equation} 
for some $m\in\N$ and for $n_1,n_2,\dots,n_m$ as follows, 
\begin{itemize} 
\item If $m=1$, then $n_1\in \Z_+\cup \{\infty\}$. 
\item If $m >1$,  then
\begin{itemize} 
\item $n_1,\dots,n_{m-1}\in \N \cup \{\infty\}$
satisfying  $n_1\le \dots \le n_{m-1}$;  and
\item $n_m \in \Z_+ \cup \{\infty\}$ satisfying $n_{m-1}\le n_{m}+1$. 
\end{itemize}
\end{itemize} 
 \begin{enumerate} 
\item The set $D$ is dense in $[0,1]$.
 \item Let $G_u$ denote the CDF of $\Theta_\infty(u)$. Then,  $G_u$ is continuous, and for $y$ defined by (\ref{eq:the_y}), 
 \begin{equation} 
 \label{eq:Gu_J} G_u (y) = \sum_{l=1}^m  p^{n_l}(1-p)^{l-1}.
 \end{equation} 
\end{enumerate} 
\end{prop}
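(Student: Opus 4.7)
The proof breaks into three components: continuity of $G_u$, density of $D$ in $[0,1]$, and the explicit formula \eqref{eq:Gu_J}. For continuity, I would apply \eqref{eq:self_similar} to singletons. Since $S_0^{-1}(\{x\})$ is nonempty only for $x \in [u,1]$ and $S_1^{-1}(\{x\})$ only for $x \in [0,u]$, plugging in $x \in \{0,1\}$ yields $\mu_u(\{0\}) = p\mu_u(\{0\})$ and $\mu_u(\{1\}) = (1-p)\mu_u(\{1\})$, hence $\mu_u(\{0\}) = \mu_u(\{1\}) = 0$, and then $\mu_u(\{u\}) = (1-p)\mu_u(\{0\}) + p\mu_u(\{1\}) = 0$. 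For any $x \in (0,1)\setminus\{u\}$, iterating the appropriate pullback ($x \mapsto x/u$ when $x < u$, $x \mapsto (x-u)/(1-u)$ when $x > u$) produces a chain of preimages in $(0,1)$ with $\mu_u(\{x^{(k)}\}) = \mu_u(\{x\})/\prod_{j=1}^k \alpha_j$ for some $\alpha_j \in \{p, 1-p\}$; since $\mu_u$ is a probability measure and $\prod_{j=1}^k \alpha_j \to 0$, this forces $\mu_u(\{x\}) = 0$, with the case where the chain hits $u$ handled by the previous step.

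For density and the formula, the key step is to identify $D$ with the orbit of $\{0,1\}$ under the IFS $\{S_0, S_1\}$, i.e., $D = \{S_{\epsilon_1} \circ \cdots \circ S_{\epsilon_k}(x_0) : x_0 \in \{0,1\},\ k \ge 0,\ \epsilon_i \in \{0,1\}\}$. A direct check shows $D$ is closed under both maps: $S_1$ shifts every index $n_l$ up by one while preserving all constraints, and $S_0$ applied to $y \ne 1$ (equivalently, $n_1 \ge 1$) prepends the index $1$ and again preserves them. Conversely, any $y \in D\setminus\{0,1\}$ is reached from a smaller element of $D$ via $S_0$ (if $n_1 = 1$, by dropping the leading term) or via $S_1$ (if $n_1 \ge 2$, in which case the ordering together with $n_{m-1}\le n_m+1$ forces $n_l \ge 1$ for all $l$, permitting removal of a factor of $u$). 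Density of $D$ in $[0,1]$ is then a standard IFS fact: since $S_0, S_1$ are strict contractions with $S_0([0,1]) \cup S_1([0,1]) = [0,1]$, the IFS attractor is $[0,1]$ and the orbit of any starting point is dense.

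The formula \eqref{eq:Gu_J} is then established by induction on the IFS depth of $y$, equivalently on a complexity measure such as $m + \sum_l n_l$. The base cases $y \in \{0,1\}$ match $G_u(0) = 0 = p^\infty$ and $G_u(1) = 1 = p^0$ under the conventions $p^\infty = 0$ and $p^0 = 1$. For the inductive step, \eqref{eq:scaling} gives $G_u(S_1(y)) = pG_u(y)$, which matches shifting every exponent $n_l$ up by one on the right-hand side of \eqref{eq:Gu_J}, and \eqref{eq:shifting} gives $G_u(S_0(y)) = p + (1-p)G_u(y)$, which matches prepending the term $p^1$ and shifting the factors $(1-p)^{l-1}$ to $(1-p)^l$. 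The main technical obstacle is the combinatorial bookkeeping around the constraint $n_{m-1} \le n_m + 1$: verifying both that it is stable under $S_0, S_1$ and that it characterizes exactly the representations produced by the iteration requires careful case analysis on whether $n_1 = 0$, $n_1 = 1$, or $n_1 \ge 2$. Once this bijection is in place, the remaining steps are essentially mechanical, and the formula is unambiguous even though the representation of a given $y \in D$ need not be unique.
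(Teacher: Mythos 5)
Your proposal is correct, but two of its three components take genuinely different routes from the paper's. For continuity, the paper proves a more general statement (Proposition \ref{prop:CDF}, valid for any finite affine system with essentially disjoint images) by supposing a maximal atom exists and deriving a contradiction from the self-similarity identity; your argument instead pulls a putative atom backwards through $S_0^{-1}$ or $S_1^{-1}$, so that its mass grows like $\mu_u(\{x\})/\prod_j \alpha_j \ge \mu_u(\{x\})\max(p,1-p)^{-k}$, forcing $\mu_u(\{x\})=0$ unless the chain terminates at $0$, $1$ or $u$, which you handle separately. This is more elementary and exploits the fact that for this two-map system every interior point other than $u$ has exactly one preimage. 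For density, the paper builds the sets $J_{m,d}$, $J_{m,u}$ by alternately applying powers of $S_1$ and then $S_0$, identifies them with the representations \eqref{eq:the_y} by induction on $m$, and proves density by an explicit greedy approximation (choose $n_1$ with $x\in[u^{n_1},u^{n_1-1})$, subtract $u^{n_1}R_{m_1}$, rescale the remainder, iterate); your route---show $D$ contains and is contained in the forward orbit of $\{0,1\}$, then invoke the standard fact that the orbit of any point under a contractive system with attractor $[0,1]=S_0([0,1])\cup S_1([0,1])$ is dense in $[0,1]$---is shorter and conceptually cleaner, with the combinatorial work relocated into the two-way identification of $D$ with the orbit, which you do carry out and which is comparable in length to the paper's induction. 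For \eqref{eq:Gu_J} the paper merely remarks that the formula follows from \eqref{eq:scaling} and \eqref{eq:shifting}; your induction on $m+\sum_l n_l$ makes that precise and is the same idea, and it also yields the well-definedness across non-unique representations for free. The one loose end is that your complexity measure is infinite when some $n_l=\infty$; since such terms vanish from both sides of \eqref{eq:Gu_J} and the truncated index sequence still satisfies the stated constraints, this reduces to the finite case and is genuinely only bookkeeping.
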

It follows from the proposition that for $u=p$, we have  $G_p(y)=y$ for all $y$ in $D$. Since this is a dense set, it follows that $\mu_p$ is uniform on $[0,1]$. 

From \eqref{eq:scaling}  $G_u (u^n) = p^n$,  for all $n\in\N$. Then by  right continuity, $G_u(0)=0$. 
Note that \eqref{eq:Gu_J} is a consequence of \eqref{eq:scaling}  and
 \eqref{eq:shifting}. The rest of the proposition will be proved below.

\subsection{Proof of Proposition \ref{prop:moments}}

Since the distribution of $\Theta_\infty(u)$ satisfies \eqref{eq:self_similar}, it follows that for any bounded Borel function $f$, 
\begin{equation}
    \label{eq:self_similar_f}
    E[ f (\Theta_\infty(u) )] = p E[f (u \Theta_\infty(u)) ]+(1-p) E [ f (u+ (1-u) \Theta_\infty(u))].
\end{equation} 

 We can harness 
\eqref{eq:self_similar_f} to calculate the moments of $\Theta_\infty(u)$. Indeed, let $f(z) =e^{\lambda z}$, and let $\varphi_u(\lambda) = E [ f(\Theta_\infty(u))]$ be the moment generating function for $\Theta_\infty(u)$. Then since $\Theta_\infty(u)$ takes values in $[0,1]$, $\varphi_u$ is entire, and $\varphi_u^{(k)}(0)=E[\Theta_\infty(u)^k]$. From \eqref{eq:self_similar_f}, we have 

$$ \varphi_u(\lambda) = p \varphi_{u} ( u \lambda) + (1-p) e^{u \lambda} \varphi_u( (1-u) \lambda).$$ 
By taking derivatives with respect to $\lambda$ we obtain, 
$$ \varphi_u^{(k)}(\lambda) = pu^k \varphi_u^{(k)} ( u \lambda) + (1-p)\sum_{j=0}^k \binom{k}{j} u^{k-j}e^{\lambda u}  (1-u)^{j} \varphi_u^{(j)}((1-u) \lambda).$$ 

Letting $\lambda=0$ and doing the algebra, we obtain Proposition \ref{prop:moments}.

\subsection{Proof of Proposition \ref{prop:Gu}}
\begin{proof}[Proof of Proposition \ref{prop:Gu}]
The continuity of $G_u$ is proved below in Proposition \ref{prop:CDF}. We show now that $D$ is dense in $[0,1]$. 

Let $J_{1,d} = \{1,u,u^2, \dots , 0\}$, and let $J_{1,u} = \{u+ (1-u) x:x\in J_{1,d}\}$. Then  every element in $y\in J_{1,u}$ is of the form $u+(1-u) u^k$ for some $k\in \Z_+\cup \{\infty\}$ and by \eqref{eq:shifting}  for such $y$, $G_u (y) = p + (1-p) p^k$. 
Iterating the definition, for $m\geq 1$ let 

\begin{equation} 
\label{eq:iterationd} J_{m+1,d} = \{u^k x: x \in J_{m,u},k \in \Z_+\cup\{\infty\}
\end{equation} 
\begin{equation} 
\label{eq:iterationu}
J_{m+1,u} = \{u+(1-u)x:x\in J_{m+1,d}\}
\end{equation}

We will now prove that for every $m\in \N$,  $J_{m,d}$ is the set of numbers expressible as \eqref{eq:the_y}. The case $m=1$ follows directly from the definition of $J_{1,d}$. We continue to the general case and will apply induction, with the base case being $m=2$. From \eqref{eq:iterationd}, we have 
$$ J_{2,d} =   \{u^{1+k_2}+ (1-u)u^{k_1+k_2}:k_1,k_2 \in \Z_+\cup\{\infty\}\}.$$ 
Therefore letting $n_1 = 1+k_2\in \N \cup\{\infty\}$ and $n_2=k_1+k_2\in \Z_+$, we also have $n_1 \le n_2 +1$, and \eqref{eq:the_y} holds for $m=2$.

We turn to the induction step. From the induction hypothesis on $J_{m,d}$ and the definition of $J_{m,u}$, we have that  $y\in J_{m,u}$  if and only if 
\begin{equation}
\label{eq:Jm_u} y = u+ (1-u) \sum_{l=1}^m u^{n_l}(1-u)^{l-1}.
\end{equation} 
and then from \eqref{eq:iterationd},  $y\in J_{m+1,d}$ if and only if there exists $k_{m+1}\in \Z_+\cup \{\infty\}$ such that 

\begin{align*} 
\label{eq:Jm1_d}
y&= u^{k_{m+1}+1} +\sum_{l=1}^{m} u^{n_l+k_{m+1}} (1-u)^{l}\\
& = u^{k_{m+1}+1} + u^{n_1+k_{m+1}}(1-u) + u^{n_2+k_{m+1}}(1-u)^2 + \dots + u^{n_m+k_{m+1}}(1-u)^{m}\\
& =\sum_{l=1}^{m+1} u^{n'_{l}}(1-u)^{l-1},
\end{align*}
where $n'_1=k_{m+1}+1$, and for $l=2,\dots,m+1$, $n'_{l} = n_{l-1}+k_{m+1}$. By the induction hypothesis, 
\begin{itemize} 
\item $1\le n_1\le \dots \le  n_{m-1}$, and therefore it follows that $1\le n'_1\le \dots \le n'_m$.
\item $n_{m-1}\le n_m +1$, and therefore 
$$n'_m = n_{m-1} +k_{m+1}\le n_m +1 + k_{m+1} = n'_{m+1}+1.$$
\end{itemize} 
This completes the proof that for all $m\geq 1$, $J_{m,d}$ is the set of numbers expressible as \eqref{eq:the_y}.

Finally, it remains to show that the union of 
 $J_{m,d}$, $m\in\Z_+$, is dense in $[0,1]$. Let
 
$$R_m = \sum_{j=1}^m (1-u)^{j-1}.$$ 
Then $R_\infty =\lim_{m\to\infty} R_m = \frac{1}{u}$.

Let $x \in (0,1)$, and let $n_1 = \min \{n: u^n \le x\}$. Then necessarily, $n_1 \in \N$, and $x \in [u^{n_1},u^{n_1-1})$. Since 
$$u^{n_1} = u^{n_1}R_1 < u^{n_1}R_2 < \dots < u^{n_1} R_\infty = u^{n_1-1},$$
there exists $m_1 \in \N$ such that 
$$u^{n_1}R_{m_1}\le x < u^{n_1}R_{m_1+1}.$$ 

Set $x_1 = u^{n_1}R_{m_1}$, and let $x_2  = x - x_1$. Then 
$x_2 \in u^{n_1}[0,(1-u)^{m_1})$.  Therefore we can write 
$x_2 = u^{n_1}(1-u)^{m_1} y_1$ where $y_1 \in [0,1)$. In other words, 
$$ x = x_1 + x_2 = \underset{\in J_{m_1,d}}{\underbrace{u^{n_1}R_{m_1}}} + u^{n_1}(1-u)^{m_1}y_1,$$ 
where $y_1 \in [0,1)$. If $y_1=0$, we stop. Otherwise, we  iterate the process for $y_1$. That is, we find natural numbers $n_2$ and $m_2$ such that
$$y_1=u^{n_2}R_{m_2}+u^{n_2}(1-u)^{m_2}y_2$$
where $y_2\in [0,1)$. Hence,
$$x=u^{n_1}R_{m_1}+u^{n_1+n_2}(1-u)^{m_1}R_{m_2}+u^{n_1+n_2}(1-u)^{m_1+m_2}y_2.$$
It is easy to check that $u^{n_1}R_{m_1}+u^{n_1+n_2}(1-u)^{m_1}R_{m_2}$ is in $J_{m_1+m_2,d}$. If $y_2=0$ we stop. If not we continue this process to get arbitrarily close to $x$. 
\end{proof} 
\section{More general Self-Affine Markov Chains}
In this section we describe dynamics that generalize the Markov chain dynamics given in \eqref{eq:thetat_evol} in a natural way, derive the corresponding limit results, and use this to prove Theorem \ref{thm:limit}. 
\subsection{Setup and Convergence of Marginals} 
 We begin with the setup. 

Fix $K \in\N$. Let $(p_0,\dots,p_K)$ be a probability vector with strictly positive entries. That is 

\begin{equation} 
\label{eq:prob_vector} 
\min_{0\leq j\leq K}  p_j > 0,\quad  \sum_{j=0}^
K p_j =1
\end{equation} 
Also, let 
\begin{equation} 
\label{eq:coefficients}
\begin{cases} 
0\le a_j(u)\le a_j (u) + b_j(u)\le 1& ~j =0,\dots,K,u \in [0,1) \\
j \to (a_j (u), b_j(u)) & \mbox{ is }1-1\mbox{ for every }u \in [0,1). 
\end{cases}
\end{equation} 
 For each $u$ and $j =0,\dots,K$, let $S_j (u) x = a_j (u) + b_j (u) x$ be an affine map from  $[0,1)$ into $[0,1]$. 

Next, let 
\begin{equation}
\label{eq:IID}
 (B_t:t\in \N)\mbox{ be IID with } P(B_1=j) = p_j,~j=0,\dots,K.
\end{equation}

We now define a family of Markov chains indexed by  $u\in [0,1)$ according to the rule: 

\begin{equation} 
\label{eq:master_chain} 
\begin{cases} 
\Theta_0(\cdot) \in [0,1]\\
\Theta_t(u)  = S_{B_{t}}(u) \Theta_{t-1}(u)= a_{B_t}(u) + b_{B_t}(u) \Theta_{t-1}(u).
\end{cases}
\end{equation} 

\begin{assum}
\label{assum:theta}
Let $K\in \N$. Assume that \eqref{eq:prob_vector}, \eqref{eq:coefficients}, \eqref{eq:IID} hold, and let  $(\Theta_t(\cdot):t\in\Z_+)$ be as in \eqref{eq:master_chain}. 
\end{assum} 
Finally, define 
$$N_{t,j} = |\{s\le t: B_s = j\}|,~j\in 0,\dots,K.$$ 
We have the following 
\begin{thm}\label{thm:convergence Theta}
Let Assumption \ref{assum:theta} hold.  Then for every $u \in [0,1)$ 
\begin{enumerate} 
\item  the process $(\tilde \Theta_t (u):t \in\Z_+)$ defined as 
$$ 
 \tilde \Theta_t(u)  = \sum_{s=1}^t a_{B_s}(u)  \prod_{i=0}^K b_i^{N_{s-1,i}}(u)+ \Theta_0(u)\prod_{i=0}^K b_i^{N_{t,i}}(u).
$$ 
is identically distributed as $(\Theta_t (u):t\in\Z_+)$. 
\item 
 \begin{equation} 
\label{eq:thetainfinity_u} \lim_{t\to\infty} \tilde \Theta_t(u) =\tilde \Theta_\infty(u) =  \sum_{s=1}^\infty a_{B_s}(u) \prod_{i=0}^K b_i^{N_{s-1,i}}(u),\mbox{ a.s.} 
\end{equation} 

\item $$ E| \tilde \Theta_t(u) - \tilde \Theta_\infty(u) | \le  \rho^t(u) \frac {2-\rho(u)}{1-\rho(u)},$$
where $\rho(u) = \sum_{i=0}^K p_i b_i(u)\in (0,1)$. 
\end{enumerate} 
\end{thm}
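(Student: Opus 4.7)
The strategy rests on iterating the affine recursion \eqref{eq:master_chain} to obtain the closed form
\[
\Theta_t(u) = \sum_{s=1}^{t} a_{B_s}(u)\prod_{r=s+1}^{t} b_{B_r}(u) + \Theta_0(u)\prod_{r=1}^{t} b_{B_r}(u),
\]
from which all three conclusions will be extracted.

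For the distributional identity (1), I would invoke the reversal symmetry of IID sequences, namely $(B_1,\dots,B_t)\stackrel{d}{=}(B_t,\dots,B_1)$. Applying the substitution $s\mapsto t+1-s$ in the closed form above turns each ``future'' product $\prod_{r=s+1}^{t} b_{B_r}$ into a ``past'' product $\prod_{r=1}^{s-1}b_{B_r} = \prod_{i=0}^K b_i^{N_{s-1,i}}(u)$; the initial-condition term transforms analogously. The reversed expression is precisely $\tilde\Theta_t(u)$, and equality in distribution follows.

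For (2), I would split $\tilde\Theta_t(u) = S_t + R_t$, where $S_t$ is the partial sum appearing in \eqref{eq:thetainfinity_u} and $R_t = \Theta_0(u)\prod_i b_i^{N_{t,i}}(u)$. The summands in $S_t$ are non-negative, so $S_t$ is monotone increasing in $t$. Because $a_{B_s}(u)$ is independent of $B_1,\dots,B_{s-1}$, the IID structure gives
\[
E\!\left[a_{B_s}(u)\prod_{i=0}^K b_i^{N_{s-1,i}}(u)\right] = \alpha(u)\,\rho(u)^{s-1},
\]
where $\alpha(u):=\sum_j p_j a_j(u)\le 1$ and $\rho(u)<1$ (the latter is guaranteed by the $1$-$1$ hypothesis on $j\mapsto(a_j(u),b_j(u))$ combined with $a_j+b_j\le 1$, which forbids all $b_j$ being equal to $1$). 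Monotone convergence then yields $S_t\uparrow\tilde\Theta_\infty(u)$ almost surely. The remainder $R_t$ is monotone non-increasing in $t$ with $E[R_t]\le\rho(u)^t\to 0$, so $R_t\downarrow 0$ a.s., and (2) follows.

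Finally, the bound in (3) is a one-line consequence of $|\tilde\Theta_\infty-\tilde\Theta_t|\le(\tilde\Theta_\infty-S_t)+R_t$: taking expectations and using $\alpha(u)\le 1$ and $E[\Theta_0(u)]\le 1$, the two pieces are bounded by $\rho(u)^t/(1-\rho(u))$ and $\rho(u)^t$ respectively, which sum to $\rho(u)^t(2-\rho(u))/(1-\rho(u))$. The only conceptually non-routine step is the time-reversal trick for (1); everything else is routine bookkeeping with the geometric series and monotone convergence.
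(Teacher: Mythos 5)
Your proof is correct and follows the paper's argument essentially verbatim: iterate the affine recursion to the closed form, use the IID time-reversal $(B_1,\dots,B_t)\stackrel{d}{=}(B_t,\dots,B_1)$ for the distributional identity in (1), and apply monotone convergence together with the geometric bound $E\left[\prod_{i=0}^K b_i^{N_{s-1,i}}(u)\right]=\rho(u)^{s-1}$ for parts (2) and (3). The only (correct) refinements are your explicit factor $\alpha(u)$ and your remark that injectivity of $j\mapsto(a_j(u),b_j(u))$ combined with $a_j+b_j\le 1$ is what forces $\rho(u)<1$, which is exactly the point the paper makes tersely.
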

\begin{proof} 
From \eqref{eq:master_chain} we see that  $\Theta_t(u)$ is a deterministic function of $\Theta_0, B_0,\dots,B_t$ and of $a_0(u),\dots,a_K(u),b_0(u),\dots,b_K(u)$. In order to keep the notation simple, in what follows we fix $u$, and  suppress the dependence on it. 

By \eqref{eq:master_chain} we get
\begin{align*} 
\Theta_t  &=  a_{B_t}  + b_{B_t} a_{B_{t-1}} + b_{B_t}b_{B_{t-1}}a_{B_{t-2}} \\
 & \quad\quad + \dots +  b_{B_t}b_{B_{t-1}} \cdots b_{B_2} a_{B_1} +  b_{B_t}b_{B_{t-1}} \cdots b_{B_1} \Theta_0.\\
& = \sum_{s=1}^t a_{B_s} \prod_{s<k\le t} b_{B_k}+ \Theta_0\prod_{k=1}^t  b_{B_k}.
\end{align*} 
Now fix $t\in \N$. For $r=1,\dots,t$, let  $\tilde B_r = B_{t-r+1}$, so $\tilde B_1 = B_t, \tilde B_2 = B_{t-1},\dots$. Also, let   $\tilde N_{r,i}=|\{1\le \rho \le r: \tilde B_\rho =i\}|$.  Then, 
$$N_{t,i}-N_{s,i} ={\bf 1}_{\{B_{s+1}=i\}} + \dots + {\bf 1}_{\{B_t=i\}} = {\bf 1}_{\{\tilde B_1=i\}} + \dots + {\bf 1}_{\{\tilde B_{t-s}=i\}}=\tilde N_{t-s,i},$$ and $B_s = \tilde B_{t-s+1}$. With this, we can  write 

\begin{align*} a_{B_s} \prod_{s< k \le t} b_{B_k} &=a_{B_s}\prod_{i=0}^K b_i^{N_{t,i}- N_{s,i}}= a_{\tilde B_{t-s+1}}\prod_{i=0}^K b_i^{\tilde N_{t-s,i}}, 
\end{align*} 
and so changing the summation from $s$ to $r=t-s+1$, we obtain 
$$\Theta_t = \sum_{r=1}^{t} a_{\tilde B_{r}} \prod_{i=0}^K b_i^{\tilde N_{r-1,i}}  + \Theta_0  \prod_{i=0}^K b_i^{\tilde N_{t,i}}.$$
Since the joint distribution of $\tilde B_1,\tilde B_2,\dots \tilde B_t$ coincides with that of $B_1,\dots,B_t$, it follows that 
\begin{equation}
    \label{eq:partial_sum} 
\Theta_t  \overset{\mbox{\scriptsize dist}}{=}  \tilde \Theta_t := \sum_{s=1}^t a_{B_s} \prod_{i=0}^K b_i^{N_{s-1,i}}+ \Theta_0\prod_{i=0}^K b_i^{N_{t,i}}.
\end{equation} 
Note that this equality in distribution holds also for the function $\Theta_t (u)$. Note that the expression on the right hand side is a partial sum of a series. By \eqref{eq:prob_vector}  $\max_{j} p_j<1$, and by \eqref{eq:coefficients}, $\max_j b_j\le 1$. In particular, $\rho=\rho(u)  = \sum_{i=0}^K p_i b_i\le 1$ with equality if and only if $b_j=1$ for all $j$, which violates our assumptions. 
Using the formula for the probability generating function of a multinomial distribution,
$$ E\left[ \prod_{i=0}^K b_i^{N_{s-1},i} \right] = \left(\sum_{i=0}^K b_i p_i\right)^{s-1}=\rho^{s-1},$$ 
monotone convergence guarantees that the partial sum in  \eqref{eq:partial_sum} converges a.s. to $\tilde \Theta_\infty$ which is defined by \eqref{eq:thetainfinity_u}. We also have 

$$ \tilde \Theta_\infty - \tilde \Theta_t = \sum_{s=t+1}^\infty a_{B_s} \prod_{i=0}^K b_i^{N_{s-1,i}}- \Theta_0 \prod_{i=0}^K b_i^{N_{t,i}}.$$
Therefore, 

\begin{align*} E | \tilde \Theta_\infty - \tilde \Theta_t| &\le \sum_{s=t+1}^\infty (\rho^{s-1})  + \rho^{t}\\
& = \rho^t (\frac{1}{1-\rho} +1)\\
& = \rho^t \frac{2-\rho}{1-\rho},
\end{align*} 
\end{proof}
\subsection{Convergence of CDFs}
We will make the following  assumptions 
\begin{assum}
\label{assum:regularity} 
Assumption \ref{assum:theta} holds, and 
\begin{enumerate} 
\item For every $z\in [0,1]$ and $i \in \{0,\dots,K\}$, the function $u \to S_i (u) z$ is right-continuous and nondecreasing. 
\item The function $u\to \Theta_0(u)$ right-continuous and non-decreasing on $[0,1)$ and has range contained in $[0,1]$. 
\end{enumerate} 

\end{assum} 
Observe that under Assumption \ref{assum:regularity}, for every $t\in\Z_+$, the function $u\to \Theta_t (u)$ can be extended to a CDF by letting
\begin{equation} 
\label{eq:extension} \Theta_t (u) = \begin{cases} 0 & u < 0 \\   1 & u \ge 1 \end{cases} 
\end{equation} 
\begin{prop}
Let Assumption  \ref{assum:regularity} hold. Then the distribution of the random CDFs $\Theta_t (\cdot)$ converges as $t\to\infty$ to the distribution of the random CDF $\tilde \Theta_\infty$: 
$$ \tilde \Theta_ \infty(u) = \begin{cases} 0 & u < 0; \\ \sum_{s=1}^\infty a_{B_s}(u) \prod_{i=0}^K b_i^{N_{s-1,i}}(u) & u \in [0,1);\\
 1 & u \ge 1. \end{cases}\mbox{ a.s.} $$ 
\end{prop}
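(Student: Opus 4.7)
The plan is to upgrade the pointwise-in-$u$ convergence supplied by Theorem \ref{thm:convergence Theta} to convergence of the random CDFs as random probability measures on $[0,1]$ by combining a tightness argument with convergence of finite-dimensional distributions (FDDs).

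First I would verify that each $\Theta_t(\cdot)$ is almost surely a CDF, by induction on $t$. The base case is Assumption \ref{assum:regularity}(2); for the inductive step, monotonicity and right-continuity of $u\mapsto a_{B_t}(u)+b_{B_t}(u)\Theta_{t-1}(u)$ follow from Assumption \ref{assum:regularity}(1) (which, applied with $z=0$ and $z=1$, also implies that $a_i$ and $b_i$ are themselves right-continuous and non-decreasing) together with the inductive hypothesis and $b_{B_t}\ge 0$. Inspecting the proof of Theorem \ref{thm:convergence Theta} shows that the equality in distribution $\Theta_t \overset{d}{=} \tilde\Theta_t$ there actually holds jointly in $u$, since both are obtained from the same measurable functional of the $B_s$'s (applied with reversed indexing) and of $\Theta_0(\cdot)$; in particular $\tilde\Theta_t(\cdot)$ is also a CDF almost surely.

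Next I would fix a countable dense $D\subset [0,1)$ and, using Theorem \ref{thm:convergence Theta}(2) and a countable intersection, obtain a single full-measure event on which $\tilde\Theta_t(u)\to\tilde\Theta_\infty(u)$ simultaneously for every $u\in D$. For any finite tuple $u_1<\dots<u_m$ in $D$, the joint a.s.\ convergence together with the joint equality in distribution established above gives
\[
(\Theta_t(u_1),\dots,\Theta_t(u_m)) \;\overset{d}{=}\; (\tilde\Theta_t(u_1),\dots,\tilde\Theta_t(u_m)) \;\longrightarrow\; (\tilde\Theta_\infty(u_1),\dots,\tilde\Theta_\infty(u_m))
\]
in distribution. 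Since $[0,1]$ is compact, the space of probability measures on $[0,1]$ is compact metrisable, so the laws of the random probability measures $d\Theta_t$ are automatically tight, and by Prokhorov every subsequence has a further weakly convergent subsequence. FDD convergence on the dense set $D$, combined with the right-continuity of CDFs, uniquely determines the law of each such subsequential limit as that of $\tilde\Theta_\infty$, and convergence of the entire sequence follows.

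The main obstacle I foresee is verifying that $\tilde\Theta_\infty$, as defined by the explicit series, is itself almost surely a right-continuous CDF on $[0,1)$, rather than merely a non-decreasing function on $D$ that might fail right-continuity at countably many points. Monotonicity on $D$ transfers cleanly from the $\tilde\Theta_t$; for right-continuity on all of $[0,1)$ I would either establish locally uniform convergence of the series from the exponential $L^1$ tail bound in Theorem \ref{thm:convergence Theta}(3) (assuming $\rho(u)$ can be controlled locally uniformly in $u$), or simply replace $\tilde\Theta_\infty$ by its right-continuous envelope, which agrees with it outside a countable set and therefore induces the same random probability measure on $[0,1]$.
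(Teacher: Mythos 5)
Your proposal is correct in substance and shares the paper's two key ingredients -- the observation that the time-reversal identity $\Theta_t(\cdot)\overset{d}{=}\tilde\Theta_t(\cdot)$ from Theorem \ref{thm:convergence Theta} holds jointly in $u$ (the paper phrases this as equality of all finite-dimensional distributions of the two function-valued processes), and the a.s.\ convergence of $\tilde\Theta_t(u)$ -- but it finishes differently. The paper upgrades the pointwise a.s.\ convergence to a single full-measure event on which $\tilde\Theta_t(u)\to\tilde\Theta_\infty(u)$ for \emph{every} $u\in[0,1)$ simultaneously, by exploiting monotonicity in $u$ of the nonnegative partial sums: convergence at a sequence $u_n\uparrow 1$ forces convergence (by domination) at all smaller $u$, so a countable intersection suffices; it then simply invokes that a.s.\ convergence implies convergence in distribution. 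You instead get a.s.\ convergence only on a countable dense set and close the argument with compactness of $\mathcal P([0,1])$, Prokhorov, and identification of subsequential limits by finite-dimensional distributions. That route works but costs you one extra subtlety you gloss over: the evaluation $\mu\mapsto\mu([0,u])$ is not weakly continuous, so a weakly convergent subsequence of the random measures $d\Theta_t$ only yields convergence of $\Theta_t(u)$ at $u$'s that are a.s.\ non-atoms of the limit; since your FDD convergence actually holds at \emph{every} $u\in[0,1)$ (not just on your pre-chosen $D$) and the set of $u$ with $P(\nu(\{u\})>0)>0$ is countable for any subsequential limit $\nu$, the identification goes through, but this should be said. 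On the other side of the ledger, your final paragraph on right-continuity of $\tilde\Theta_\infty$ is \emph{more} careful than the paper's: an increasing limit of right-continuous nondecreasing functions need not be right-continuous, so the paper's one-line appeal to monotone convergence is shaky, whereas your right-continuous-envelope fix (or a locally uniform bound via $\rho(u)$) genuinely resolves the point, since the envelope differs from the series on at most a countable set and induces the same random measure.
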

\begin{proof}
We extend $\tilde \Theta_t (\cdot)$ to $\R$ analogously to \eqref{eq:extension}. 
As a non-decreasing and right-continuous function  is determined by the values it attains on the rationals, and all finite-dimensional distributions of $\Theta_t(\cdot)$ and of $\tilde \Theta_t(\cdot)$ coincide, it follows the two  function-valued processes $(\Theta_t (\cdot):t \in\Z_+)$ and $(\tilde \Theta_t (\cdot):t\in\Z_+)$ are identically distributed. Since a.s. convergence implies convergence in distribution, it is enough to show that $\tilde \Theta_t (\cdot)$ converges a.s. to the prescribed limit. 

Let $(u_n:n\in\N)$ be a sequence in $[0,1)$ increasing to $1$, and let $A_n$ be the event that the right hand side of \eqref{eq:partial_sum} converges for $u=u_n$. By \eqref{eq:master_chain}, $\Theta_t(u)$ is a (random) composition of the functions  $S_0\dots,S_K$, all of which are increasing in $u$, so we have  $A_{n+1}\subseteq A_n$. Now let $A= \cap_{n=1}^\infty A_n$. Then, $$P(A) = \lim_{n\to\infty} P(A_n) =1.$$ 
Therefore, on $A$, 
$\lim_{t\to\infty} \tilde \Theta_t (u)$ exists for all $u\in [0,1)$, and is equal to $\tilde \Theta_\infty (u)$, the expression on the righthand side of \eqref{eq:thetainfinity_u}. By monotone convergence, this latter expression is right-continuous on $[0,1)$. We have therefore shown that $\tilde \Theta_t (\cdot)$ converges pointwise to $\tilde \Theta_\infty (\cdot)$ on $[0,1)$ a.s. Finally, extend $\tilde \Theta_\infty$ to $\R$ according to \eqref{eq:extension} and the result follows. 
\end{proof} 

\subsection{Proof of Theorem 
\ref{thm:limit}}

We start with the following corollary of Theorem \ref{thm:convergence Theta}.

\begin{cor} 
\label{cor:twofunctions}
Let $K=1, a_0(u)=0$, and  $(p_0,p_1) =(p,1-p)$ for some $p\in (0,1)$. Then,
for $u \in [0,1)$, 
\begin{equation} 
\label{eq:K2_limit} \lim_{t\to\infty}\tilde \Theta_t(u)= \frac{a_1(u)}{b_0(u)} \sum_{k=0}^\infty   b_0^{T_k}(u) (\frac{b_1(u)}{b_0(u)})^{k} \mbox{ a.s.},
\end{equation} 
where $T_k = G_0 +\dots + G_k$, and $G_0,G_1,\dots $ are IID $\mbox{Geom}(1-p)$.
\end{cor}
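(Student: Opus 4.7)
The plan is to apply Theorem \ref{thm:convergence Theta} directly and re-index the resulting sum along the subsequence of times where $B_s = 1$, using the fact that $a_0(u)=0$ annihilates all other terms.

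First, I would specialize the formula
$$\tilde \Theta_\infty(u) = \sum_{s=1}^\infty a_{B_s}(u) \prod_{i=0}^K b_i^{N_{s-1,i}}(u)$$
from part (2) of Theorem \ref{thm:convergence Theta} to the case $K=1$. Since $a_0(u)=0$, only indices $s$ with $B_s=1$ contribute, yielding
$$\tilde \Theta_\infty(u) = a_1(u) \sum_{s:\,B_s = 1} b_0(u)^{N_{s-1,0}} b_1(u)^{N_{s-1,1}}.$$

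Next, I would introduce the successive hitting times $\tau_0 < \tau_1 < \dots$ of the event $\{B_s = 1\}$. Since $(B_t)$ are IID Bernoulli with $P(B_t=1) = 1-p$, the inter-arrival times $G_0 = \tau_0$ and $G_k = \tau_k - \tau_{k-1}$ for $k\geq 1$ are IID $\mbox{Geom}(1-p)$-distributed random variables taking values in $\{1,2,\dots\}$, so $\tau_k = T_k$. At time $s = \tau_k$, by construction there have been exactly $k$ prior occurrences of $1$ and $\tau_k - 1 - k$ prior occurrences of $0$ among $B_1,\dots,B_{s-1}$; hence
$$N_{\tau_k - 1, 1} = k, \qquad N_{\tau_k - 1, 0} = T_k - 1 - k.$$

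Substituting these values into the previous display gives
$$\tilde \Theta_\infty(u) = a_1(u) \sum_{k=0}^\infty b_0(u)^{T_k - 1 - k} b_1(u)^{k} = \frac{a_1(u)}{b_0(u)} \sum_{k=0}^\infty b_0(u)^{T_k} \left(\frac{b_1(u)}{b_0(u)}\right)^{k},$$
which is precisely \eqref{eq:K2_limit}. The a.s.\ convergence of the series is inherited from Theorem \ref{thm:convergence Theta}. There is no real obstacle here beyond bookkeeping: the content is entirely in reading off the joint distribution of $(\tau_k)$ and verifying the two occupation counts at time $\tau_k - 1$.
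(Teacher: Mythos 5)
Your proposal is correct and follows essentially the same route as the paper: the term $a_0(u)=0$ kills all summands with $B_s=0$, the hitting times of $\{B_s=1\}$ are identified with the partial sums $T_k$ of IID $\mathrm{Geom}(1-p)$ variables, and the occupation counts $N_{T_k-1,1}=k$, $N_{T_k-1,0}=T_k-1-k$ give the re-indexed series. No gaps; this matches the paper's argument step for step.
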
 
\begin{proof} 
Since $a_0(u)=0$, the summation in  the expression for $\tilde \Theta_\infty(u)$ in Theorem \ref{thm:convergence Theta} is  only over those $s$ such that $B_s=1$. Let $T_{-1}=0$ and continue inductively, letting $T_{k} = \inf \{t> T_{k-1}:B_t =1\}$, $k\in \Z$. Then $(T_k-T_{k-1}:k\in \N)$ is an IID sequence with distribution $\mbox{Geom}(1-p)$, or, equivalently, $T_k $ is the partial sum of exactly $k+1$ IID $\mbox{Geom}(1-p)$. Note that $N_{T_{k}-1,1}= k$ and $N_{T_k-1,0}={T_k-1-k}$. Therefore,  

\begin{align*}  \tilde \Theta_\infty(u) &= \sum_{k=0}^\infty a_1(u) b_0(u)^{T_k-1-k} b_1^{k}(u)\\
 & =\frac{a_1(u)}{b_0(u)} \sum_{k=0}^\infty   b_0^{T_k}(u) (\frac{b_1(u)}{b_0(u)})^{k}.
\end{align*} 
\end{proof} 
We now apply Corollary \ref{cor:twofunctions} to the case $$(a_0(u),b_0(u))=(0,u)\mbox{ and } (a_1(u),b_1(u))=(u,1-u).$$ 
This proves \eqref{eq:limit}.

Observe that since the function on the right hand side of \eqref{eq:K2_limit} is continuous a.s. it follows that the limit holds for all $u \in (0,1)$, a.s. This implies that the distribution of the random function $\tilde \Theta_t (\cdot)$ converges as $t\to\infty$ to the distribution of the function on the right hand side of \eqref{eq:K2_limit}, completing the proof of Theorem \ref{thm:limit}. 
\section{Properties of the self-similar measure}

\subsection{Continuity}
\begin{prop} 
\label{prop:CDF}
Let Assumption \ref{assum:theta} hold and assume now that $b_i (u)>0$ for all $i$ and that $S_i(u)$ is $1-1$  for all $i$, and also that for $i\ne j$, the intersection of the images of $S_i(u)$ and $S_j(u)$ is either empty or contains exactly one element.  
 Then the distribution of $\Theta_\infty (u)$ is continuous and its CDF satisfies 
$$ G_u (z) =\frac{ G_u (S_{i'}(u) z) - \sum_{i<i'} p_i}{p_{i'}}$$
for all $i'\in \{0,\dots,K\}$, $ u \in [0,1)$ and $z \in \R$. 
\end{prop}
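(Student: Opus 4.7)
The plan is to address the functional equation first, as a near-direct consequence of self-similarity, and then establish continuity by ruling out atoms of $\mu_u$, which is the more delicate part. Throughout, fix $u \in [0,1)$ and abbreviate $S_i=S_i(u)$, $a_i=a_i(u)$, $b_i=b_i(u)$, and $I_i=[a_i,a_i+b_i]$. Since distinct images intersect in at most a single point and $b_i>0$, no two $I_i$ can overlap in an interval of positive length; after reindexing we may assume $a_0<\cdots<a_K$ with $a_i+b_i\le a_{i+1}$. This is the ordering implicit in the sum $\sum_{i<i'}p_i$ that appears in the proposition.

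For the functional equation, I would apply the self-similarity identity $\mu_u=\sum_i p_i\,\mu_u\circ S_i^{-1}$ (the $(K+1)$-map generalization of \eqref{eq:self_similar}) to the half-line $(-\infty,S_{i'}z]$ with $z\in[0,1]$. Using the monotonicity of each $S_i$ (from $b_i>0$) together with the left-to-right ordering, the preimage of $(-\infty,S_{i'}z]$ under $S_i$, intersected with $[0,1]$, equals $[0,1]$ when $i<i'$, equals $[0,z]$ when $i=i'$, and is at most a single endpoint (with $\mu_u$-measure zero, once continuity is proved) when $i>i'$. Substituting yields $G_u(S_{i'}z)=\sum_{i<i'}p_i+p_{i'}G_u(z)$, which is exactly the stated identity after rearrangement.

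For continuity I would argue by contradiction. Let $M=\sup_z\mu_u(\{z\})$ and suppose $M>0$. The set $A=\{z:\mu_u(\{z\})=M\}$ has at most $\lfloor 1/M\rfloor$ elements, so it is finite and non-empty. For $z\in A$, self-similarity gives
\begin{equation*}
M=\mu_u(\{z\})=\sum_{i:z\in I_i}p_i\,\mu_u(\{S_i^{-1}(z)\})\le\Big(\sum_{i:z\in I_i}p_i\Big)M.
\end{equation*}
Equality forces both $\sum_{i:z\in I_i}p_i=1$ (so $z\in I_i$ for every $i$, since each $p_i>0$) and $\mu_u(\{S_i^{-1}(z)\})=M$ for every $i$, i.e., $S_i^{-1}(z)\in A$. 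The first condition gives $z\in\bigcap_i I_i$, which by the pairwise one-point intersection hypothesis has at most one element; so $|A|\le 1$, say $A=\{z^*\}$. The second condition then forces $S_i^{-1}(z^*)=z^*$ for every $i$, meaning $z^*$ is a common fixed point of all the $S_i$.

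The main obstacle at this point is ruling out such a common fixed point, but the hypotheses are tailored to do so. Assuming $K\ge 1$, pick any two maps $S_i,S_j$ with $i\ne j$. If they shared a fixed point $z^*$, then $S_k(x)=z^*+b_k(x-z^*)$ for $k\in\{i,j\}$, so their images would be nested intervals centered at $z^*$ of positive lengths $b_i$ and $b_j$; their intersection would have length $\min(b_i,b_j)>0$, contradicting the one-point intersection hypothesis. So no common fixed point exists, $A=\emptyset$, $M=0$, and $\mu_u$ is atomless, hence $G_u$ is continuous.
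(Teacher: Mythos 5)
Your proof is correct, and while the functional-equation part follows the same route as the paper (apply $\mu_u=\sum_i p_i\,\mu_u\circ S_i^{-1}$ to the half-line $(-\infty,S_{i'}z]$ and use the left-to-right ordering of the images, which you rightly note is implicit in the sum $\sum_{i<i'}p_i$), your continuity argument finishes differently. Both arguments start from a maximal atom, but the paper then splits into cases according to whether the maximal atom $x$ lies in the image of one map or of two: the one-image case gives an immediate contradiction from $p_i<1$, and the two-image case is shown to force $p_i+p_{i+1}=1$, hence $K=1$, after which the argument is rerouted back to the one-image case at the endpoints $0$ and $1$. You instead observe that maximality forces $\sum_{i:\,z\in I_i}p_i=1$, so every maximal atom lies in \emph{all} the images; the pairwise one-point-intersection hypothesis then collapses the set of maximal atoms to a single point $z^*$, which must be a common fixed point of all the $S_i$, and a common fixed point of two distinct affine maps with positive slopes would make their images overlap in an interval of length $\min(b_i,b_j)>0$, contradicting the hypothesis. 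This is a clean unification that avoids the paper's slightly awkward reduction in the two-image case, at the cost of explicitly invoking $K\ge 1$ (which is what the paper's convention $K\in\N$ provides, and is genuinely needed: for a single contraction the limit is a point mass). Two small polish points: the attainment of the supremum $M$ deserves the one-line remark that only finitely many atoms can have mass exceeding $M/2$ (the paper glosses over this equally quickly), and the two images sharing the fixed point $z^*$ are nested intervals \emph{containing} $z^*$ rather than centered at it --- the conclusion that their intersection has length $\min(b_i,b_j)$ is unaffected.
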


\begin{proof}
Since each of the Markov chains $(\Theta_t(u):t \in\Z_+)$, $u \in [0,1)$ converges as $t\to\infty$ to a unique distribution indexed by $u$, it follows that the limiting distribution is the unique stationary distribution for the given dynamics. On the other hand, if $\mu_u$ is the stationary distribution for that chain, then 
$$ \mu_u(\cdot)  = \sum_{i=0}^K p_i \mu_u \circ  S^{-1}_i (u)(\cdot) .$$

Hence, for every $x$, 
\begin{equation}
\label{eq:atom}\mu_u (\{x\}) = \sum_{i=0}^Kp_i \mu_u \circ S_i(u)^{-1} (\{x\}),
\end{equation} 
and there can be at most two distinct $i$'s such that $S_i(u)^{-1}(\{x\})$ is not empty.  

We now prove that $\mu_u$ has no atoms. By contradiction, assume that the set of atoms is not empty. Since the cumulative distribution function $G_u$ is increasing and every atom for $\mu_u$ is a discontinuity point for $G_u$ there are at most countably many atoms for $\mu_u$. Since the sum over all atoms $\sum_z\mu_u(\{z\})\leq 1$ it is easy to see that $\mu_u(\{z\})$ attains a maximum for some $z=x$.

Then either 
\begin{enumerate} 
\item  $x$ is in the image of $S_i (u)$ for exactly one $i$. Let $\{z\} = S_i(u)^{-1}(\{x\})$. By \eqref{eq:atom} 
\begin{align*}
\mu_u (\{x\}) =&p_i \mu_u \circ S_i(u)^{-1} (\{x\})\\
=& p_i \mu_u(\{z\}) \\
<& \mu_u (\{x\}),
\end{align*}
where we used the assumed maximality of $\mu_u(\{x\})$. This is  a contradiction; 
\item  or $x$ is the image of $S_i(u)$ and $S_j(u)$ for $i\not =j$. Then,  there is a unique $i\in \{0,\dots,K-1\}$ such that $S_i(u) 1=x=S_{i+1}(u)0$. By
\eqref{eq:atom} $\mu_u(\{x\})$ is equal to  $p_i \mu_u(\{1\})+p_{i+1}\mu_u(\{0\})$. By the maximality of $\mu_u(\{x\})$, it follows that $\mu_u(\{0\}) = \mu_u(\{1\}) = \mu_u (\{x\})$ and $p_i+p_{i+1}=1$. Therefore, we necessarily have $K=1$ and $i=0$. We are now back to case 1 for $x=0$ and $x=1$. That is, $\mu_u$ reaches a maximum at $0$ and at $1$ but $0$ and $1$
are each the image of a single $S_i(u)$. As in case 1 above this leads to a contradiction. 
\end{enumerate}

Therefore $\mu_u$ has no atoms and the corresponding distribution function is continuous.

We now turn to the proof of the  second statement in Proposition \ref{prop:CDF}.

 If $I_i$ is the image of $[0,1]$ under $S_i(u)$,  then $\mu_u(I_i)=p_i$, and if $G_u$ is the CDF of $\mu_u$, 

$$ G_u (x) = \sum_{i=0}^K p_i G_u (S^{-1}_i (u) (x)).$$ 

If $x\in I_{i'}$, the sum above becomes 

$$ G_u (x) = \sum_{i<i'} p_i +p_{i'} G_u (S^{-1}_{i'}(u) (x)) +0.$$ 

We can rewrite this by letting $z =S^{-1}_{i'}(u)(x)=(x - a_{i'}(u))/b_i(u)$ to obtain the following,
$$ G_u (z) =\frac{ G_u (S_{i'}(u) z) - \sum_{i<i'} p_i}{p_{i'}}.$$
\end{proof}

\subsection{Local exponents and singularity} 
Next, we will get some additional information on the behavior of $\mu_u$ in the particular case $K=1$, $S_0(x) = u+ (1-u)x$,  $S_1 (x) = ux$ and $(p_0,p_1)=(1-p,p)$. We already know that $\mu_u$ has no atoms because $G_u$ is continuous. We will next explore its local behavior, and will use that to conclude that with the exception of the case $p=u$, $\mu_u$ is singular with respect to the Lebesgue measure. 

Recall that $D$ denotes the set of all points in the form \eqref{eq:the_y}. Then $D$ is countable and dense, and therefore $\mu_u(D)=0$. We will show that the local exponent of $\mu_u$ on $D$ is quite different from the local exponent outside of $D$, see Figure \ref{fig:2} 

\subsubsection{Local exponent on D}
We will first look at the behavior of $G_u$ near points in $D$. Let $y\in D$
\begin{equation} 
\label{eq:look_at_y}y = \sum_{l=1}^m u^{n_l}(1-u)^{l-1}
\end{equation} 
be of the form $m\ge 2$ and $n_1,\dots,n_m$ all finite. We make this restriction only to simplify the argument.
Now, let   $\delta = u^{n_m+k}(1-u)^{m}$. Then, by 
\eqref{eq:Gu_J},
\begin{align}  
\nonumber G_u (y+\delta) -G_u (y) & =  p^{n_m+k}(1-p)^{m} \\
\nonumber
 &= u^{\frac {\ln p}{\ln u} (n_m+k)}(1-p)^{m}\\
 \nonumber 
 &= (\delta / (1-u)^{m})^{\frac{\ln p }{\ln u}} (1-p)^{m}\\
 & = C(y,u,p) \delta^{\frac{\ln p}{\ln u}}.
\label{eq:right_behave} 
\end{align} 
We will now approach the same $y$ from the left. 
\begin{align*} 
y &= \sum_{l=1}^m u^{n_l} (1-u)^{l-1} \\
 & = \sum_{l=1}^{m-1} u^{n_l}(1-u)^{l-1} + u^{n_m}(1-u)^{m-1}\\
 & = \sum_{l=1}^{m-1} u^{n_l}(1-u)^{l-1} + \sum_{l'=m}^\infty u^{n_{m}+1}(1-u)^{l'-1}.
 \end{align*} 

For $k\in \N$, let
 $$\delta = \sum_{l'=m+k}^\infty u^{n_{m}+1}(1-u)^{l'-1}
= u^{n_m}(1-u)^{m+k-1}.$$ 
Note that as $k\to\infty$, $\delta \to 0$. With this choice, 
$$y-\delta =\sum_{l=1}^{m-1} u^{n_l}(1-u)^{l-1} + \sum_{l'=m}^{m+k-1} u^{n_{m}+1}(1-u)^{l'-1}$$ 
and so the by the continuity of $G_u$ we have 
\begin{align}
\nonumber G_u(y) - G_u(y-\delta) &= \sum_{l'=m+k}^\infty p^{n_m+1}(1-p)^{l'-1}\\
\nonumber
& =p^{n_m}(1-p)^{m+k-1} \\
\nonumber
& =p^{n_m}( 1-u)^{(m+k-1) \frac{\ln (1-p)}{\ln (1-u)} }\\
\nonumber
& =p^{n_m}(\delta / u^{n_m})^{\frac{\ln (1-p)}{\ln (1-u)}}\\
\label{eq:left_behave}
& =C'(y,u,p) \delta^{\frac{\ln (1-p)}{\ln (1-u)}}.
\end{align}
Combining \eqref{eq:right_behave} and \eqref{eq:left_behave} we obtain that 
\begin{align}
\label{eq:Dright}
\lim_{\delta \to 0+} & \frac{ \ln | G_u (y+\delta) - G_y(y)|}{\ln \delta} =  \frac{\ln p}{\ln u}\\
\label{eq:Dleft}
\lim_{\delta \to 0-} & \frac{ \ln | G_u (y+\delta) - G_y(y)|}{\ln \delta} =  \frac{\ln (1-p)}{\ln (1-u)}
\end{align}
Note that when the right hand side in \eqref{eq:Dright} or \eqref{eq:Dleft} is larger or equal to  $1$, then the respective one-sided derivative exists, and is equal to zero if the limit is strictly larger than $1$. Furthermore, $\ln p < \ln u $ if and only if $\ln (1-p) > \ln (1-u)$, therefore exactly one of the one-sided limits is larger than $1$ except for $u=p$.
\subsubsection{Local exponent outside of D}
 To ease notation we will freeze $u$ and drop the dependence on it. For each $ t \in \N$, let $\epsilon  = (\epsilon_1,\dots,\epsilon_t) \in \{0,1\}^{t}$.  Write $|\epsilon| = \sum \epsilon_i$. 
Let 
$$I(\epsilon) = S_{\epsilon_1}\circ S_{\epsilon_{t-1}} \cdots \circ S_{\epsilon_t}\left( [0,1]\right).$$ 
Note that we compose from the right rather than the left. Let  ${\cal F}_t=\{I(\epsilon):\epsilon \in \{0,1\}^t\}$. Since $[0,1]= S_0 ([0,1])\cup S_1([0,1])$, it follows from induction  that the union of all elements in ${\cal F}_t$ is $[0,1]$. In what follows, if $I$ is an interval, write $|I|$ for its length. If $\epsilon \in \{0,1\}^t$, then  $|I(\epsilon)|= (1-u)^{t-|\epsilon|}u^{|\epsilon|}$. Now 
$$\sum_{\epsilon\in \{0,1\}^t} |I(\epsilon)| = \sum_{k=0}^t \binom{t}{k}(1-u)^{t-k}u^{k} = 1,$$ 
Therefore the intervals in ${\cal F}_t$  are non overlapping. We also observe that the points which belong to exactly two intervals are elements in the countable set $D$. 

Suppose $x \in [0,1]\cap D^c$. Then for every $t$ there exists a unique element $I_t (x) \in {\cal F}_t$ such that $x \in I_t(x)$. Also since every element in ${\cal F}_{t+1}$ is a subset of a unique element in ${\cal F}_{t}$, it follows that $I_{t+1}(x) \subset I_t (x)$. As a result, $$\bigcap_{t=1}^\infty I_t (x) = \{x\},$$ 
and there exists a unique sequence  $\epsilon\in \{0,1\}^{\N}$ such that $x \in I(\epsilon_1,\dots,\epsilon_t)$ for all $t$. Note that $\epsilon_1,\epsilon_2,\dots$ are all functions of $x$, and that for every $I \in {\cal F}_t$, $x\in I$ if and only if $I = I(\epsilon_1(x),\epsilon_2(x),\dots,\epsilon_t(x))$. 

Let's sample $X$ according to $\mu$ and fix $(\epsilon'_1,\epsilon'_2,\dots,\epsilon'_t)\in \{0,1\}^t$. Then 
\begin{align*}  P( X \in I(\epsilon'_1,\epsilon'_2,\dots,\epsilon'_t) ) &= p_0 \mu \left(x: S_0x \in  S_{\epsilon'_2} \dots\circ S_{\epsilon'_t}([0,1])\right) \\
&+ p_1 \mu \left(x:S_1 x \in  S_{\epsilon'_2} \dots\circ S_{\epsilon'_t}([0,1])\right) \\
& = p_{\epsilon'_1}\mu (x: x\in S_{\epsilon'_2}\circ\dots\circ S_{\epsilon'_t}([0,1])).
\end{align*}
Iterating,  
$$ P( X \in I(\epsilon'_1,\dots,\epsilon'_t)) = \prod_{i=1}^t p_{\epsilon'_i}.$$
Equivalently, 
$$ P( \epsilon_1(X)=\epsilon'_1,\epsilon_2(X) = \epsilon'_2,\dots,\epsilon_t(X) = \epsilon'_t) = \prod_{i=1}^t p_{\epsilon'_i}.$$ 
That is, when $X$ is sampled according to $\mu$, the RVs $(\epsilon_1(X),\epsilon_2(X),\dots)$ are IID $\mbox{Bern}(p_1)$. Let $B_t(x) = \sum_{i=1}^t \epsilon_i(x)$ and recall that $I_t(x)$ is the unique interval in ${\cal F}_t$ containing $x$. Then by construction  $\mu(I_t (x)) = (1-p)^{t-B_t(x))} p^{B_t (x)}$, while $|I_t(x)|=(1-u)^{t-B_t(x))} u^{B_t (x)}$. 
Therefore by the law of large numbers
\begin{equation} 
\label{eq:asdim} \lim_{t\to\infty} \frac{ \ln \mu (I_t(x)) }{\ln |I_t(x)|}= \frac{p \ln p + (1-p)\ln (1-p)}{p \ln u + (1-p) \ln (1-u)},\quad \mu\mbox{ a.s.}
\end{equation} 
\begin{prop} 
\label{prop:Sing}
For every $u\not =p$ the measure $\mu_u$ is singular
with respect to the Lebesgue measure.
\end{prop}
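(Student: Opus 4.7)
The plan is to leverage the local exponent identity \eqref{eq:asdim} together with a standard Lebesgue differentiation (martingale) argument for the refining partitions $\mathcal{F}_t$. Set
$$\alpha = \frac{p\ln p + (1-p)\ln(1-p)}{p\ln u + (1-p)\ln(1-u)},$$
so that \eqref{eq:asdim} reads $\ln \mu_u(I_t(x))/\ln|I_t(x)| \to \alpha$ for $\mu_u$-a.e.\ $x$. The first step is to verify that $\alpha \in (0,1)$ whenever $u\ne p$. Writing $g(v)=p\ln v+(1-p)\ln(1-v)$ on $(0,1)$, one checks $g'(v)=(p-v)/(v(1-v))$, so $g$ attains a strict maximum at $v=p$ with $g(p)<0$. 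Hence for $u\ne p$, $g(u)<g(p)<0$, and since both numerator and denominator of $\alpha$ are negative with $|g(u)|>|g(p)|$, we obtain $\alpha\in(0,1)$.

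Second, let $A$ denote the Borel set of full $\mu_u$-measure on which the limit in \eqref{eq:asdim} holds. For $x\in A$, since $|I_t(x)|\to 0$ and $\alpha-1<0$,
$$\frac{\mu_u(I_t(x))}{|I_t(x)|}=|I_t(x)|^{\,\alpha-1+o(1)}\longrightarrow \infty.$$
On the other hand, the partitions $\mathcal{F}_t$ are refining with $\max_{I\in\mathcal{F}_t}|I|=\max(u,1-u)^t\to 0$. Putting $M_t(x)=\mu_u(I_t(x))/|I_t(x)|$, the sequence $(M_t,\sigma(\mathcal{F}_t))$ is a nonnegative martingale on $([0,1],\text{Leb})$, as one verifies from the identity $\int_I M_{t+1}\,d\text{Leb}=\mu_u(I)=\int_I M_t\,d\text{Leb}$ for every atom $I\in\mathcal{F}_t$. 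By Doob's martingale convergence theorem, there is a Lebesgue-full set $B\subseteq[0,1]$ on which $M_t$ converges to a finite limit.

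Since divergence on $A$ is incompatible with a finite limit on $B$, we have $A\cap B=\emptyset$, hence $\text{Leb}(A)\le \text{Leb}([0,1]\setminus B)=0$. Combined with $\mu_u(A)=1$ this gives $\mu_u\perp\text{Leb}$, which is the claim. I expect the main obstacle to be the verification $\alpha<1$ for $u\ne p$: this inequality encodes the essential geometric fact that the self-similar measure concentrates on a set of local dimension strictly less than $1$, and once it is in hand the differentiation argument is routine. A minor point to watch is that Doob's theorem is applied here against Lebesgue measure, not against $\mu_u$, so no absolute-continuity hypothesis is implicitly used in the martingale step.
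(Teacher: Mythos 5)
Your proposal is correct, and it reaches the conclusion by a route that differs from the paper's in its final step. Both arguments hinge on the same two facts: the a.s. local exponent $\alpha$ in \eqref{eq:asdim} satisfies $\alpha<1$ when $u\ne p$, and consequently the ratio $M_t(x)=\mu_u(I_t(x))/|I_t(x)|$ diverges to $+\infty$ on a set $A$ of full $\mu_u$-measure. You then show $\mathrm{Leb}(A)=0$ by observing that $(M_t)$ is a nonnegative, $L^1$-bounded martingale on $([0,1],\mathrm{Leb})$ with respect to the filtration generated by the refining partitions ${\cal F}_t$, so Doob's theorem forces a finite limit Lebesgue-a.e. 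The paper instead invokes Lebesgue's theorem that the nondecreasing function $G_u$ is differentiable Lebesgue-a.e., and notes that differentiability at $x$ (with finite derivative) is incompatible with $M_t(x)\to\infty$, since the difference quotient over a shrinking interval containing $x$ is a convex combination of the one-sided difference quotients at $x$. The two tools are close cousins (martingale convergence along refining partitions is essentially a differentiation theorem), so neither buys much over the other here; your version has the advantage of not needing the small interpolation argument linking differentiability to the interval ratios, while the paper's is shorter given that $G_u$ is monotone. One genuine addition on your side: you actually prove $\alpha\in(0,1)$ for $u\ne p$ via the strict maximum of $g(v)=p\ln v+(1-p)\ln(1-v)$ at $v=p$, a point the paper asserts without proof. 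Minor housekeeping worth a sentence in a final write-up: $M_t(x)$ is only well defined off the countable set $D$, which is null for both measures, and the martingale identity $\int_I M_{t+1}\,d\mathrm{Leb}=\mu_u(I)$ uses that the children of $I$ overlap only at points of $D$, where $\mu_u$ has no atoms by Proposition \ref{prop:CDF}.
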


\begin{proof}
The r.h.s of \eqref{eq:asdim} is in  $(0,1]$ and is equal to $1$ if and only if $u=p$, in which case $\mu_u$ is uniform. Assume now $u\not =p$. Let $N$ be the differentiability set of $G_u$ in $(0,1)$.  Since $G_u$ is nondecreasing, $N$ has Lebesgue measure one.  However, \eqref{eq:asdim} shows that $\mu_u (N)=0$. Therefore $\mu_u$ is singular with respect to the Lebesgue measure. 
\end{proof}
\begin{figure}
\label{fig:2} 
\begin{tikzpicture}
\begin{axis}[
    axis lines = left,
    xlabel = $u$,
    ylabel = {Local Exponent},
]
\addplot [
    domain=0.001:0.4, 
    samples=500, 
    color=red
]
{min(ln(0.4)/ln(x),1)};
\addplot [
    domain=0.4:.999, 
    samples=500, 
    color=red,
]
{min(ln(0.6)/ln(1-x),1)};
\addplot [
    domain=0:1, 
    samples=500, 
    color=blue,
    ]
    {(0.4*ln(0.4)+0.6*ln(0.6))/(0.4*ln (x)+ 0.6*ln(1-x))};
\end{axis}
\end{tikzpicture}
\caption{The local exponents for $\mu_u$ as a function of $u$, with $p=0.4$. The red graph represents the minimum of the left and right limits from \eqref{eq:Dleft} and \eqref{eq:Dright}. The blue graph represents the a.s. limit of \eqref{eq:asdim}. }
\end{figure}
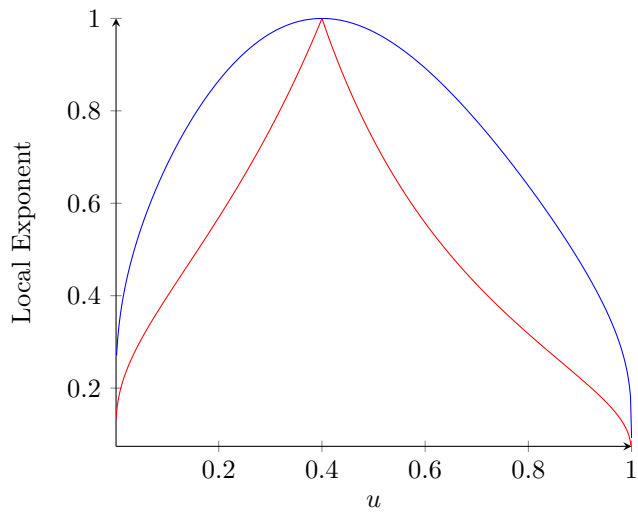
~~~~~
\bibliographystyle{amsplain}

\end{document}